\documentclass[letterpaper,10pt]{amsart}
\usepackage{indentfirst} % Indent first paragraph after section header
\usepackage{amssymb}
\usepackage{mathtools} % improves amsmath
\usepackage{mathabx} % makes many symbols (as $\leq$) more beautiful
\usepackage{amsthm}  
\usepackage{thmtools} 
\usepackage{enumitem} % special itemize with custom tags and labels that work 
\usepackage[colorlinks=true]{hyperref} 
\usepackage[usenames,dvipsnames]{xcolor} % With XCOLOR, printed quality is good, (but with COLOR it's bad) 

%%%%%%%%%%%%%% DEFINITION OF A NEW BIBITEM - WITH LINKS FOR MATHSCINET
%%%% Source: http://tex.stackexchange.com/questions/39177/web-links-in-the-bibliographical-labels/40423
\makeatletter

% branch between the items with and without optional argument
\def\MRbibitem{\@ifnextchar[\my@lbibitem\my@bibitem}

% format the bibitem number incl. the url hyperlink
\def\mybiblabel#1#2{\@biblabel{{\hyperref{http://www.ams.org/mathscinet-getitem?mr=#1}{}{}{#2}}}}

% create a 'label' for referencing from citation in the text to the biblography
\def\myhyperanchor#1{\Hy@raisedlink{\hyper@anchorstart{cite.#1}\hyper@anchorend}}

% bibitem with optional argument
\def\my@lbibitem[#1]#2#3#4\par{%
    \item[\mybiblabel{#2}{#1}\myhyperanchor{#3}\hfill]#4%
    \@ifundefined{ifbackrefparscan}{}{\BR@backref{#3}}%
    \if@filesw{\let\protect\noexpand\immediate% write to aux-file
       \write\@auxout{\string\bibcite{#3}{#1}}}\fi\ignorespaces%
}

% bibitem without optional argument
\def\my@bibitem#1#2#3\par{%
    \refstepcounter\@listctr% standard tex item counter for the generic item number
    \item[\mybiblabel{#1}{\the\value\@listctr}\myhyperanchor{#2}\hfill]#3%
    \@ifundefined{ifbackrefparscan}{}{\BR@backref{#2}}%
    \if@filesw\immediate\write\@auxout% write to aux-file
        {\string\bibcite{#2}{\the\value\@listctr}}\fi\ignorespaces%
}

\makeatother

\declaretheorem{theorem}
\declaretheorem[sibling=theorem]{lemma}

\declaretheorem[sibling=theorem]{proposition}
\declaretheorem[sibling=theorem,style=remark]{remark}

% \declaretheorem{example}
% \declaretheorem{definition}
% \declaretheorem{conjecture}

\newcommand{\R}{\mathbb{R}}

\renewcommand{\P}{\mathbb{P}}

\newcommand{\cM}{\mathcal{M}}
\newcommand{\cP}{\mathcal{P}}

\DeclareMathOperator{\per}{per}

% Global choices for lists (enumitem package)
\setlist[enumerate,1]{label={\upshape(\alph*)},ref=\alph*}
\setlist[enumerate,2]{label={\upshape(\arabic*)},ref=\arabic*}

%%%%%%%%%%%%%%%%%%%%%%%%%%%%%%%%%%%%%%%%%%%%%%%%%%%%%%%%%%%%%%%%%%%%%
\begin{document}

\title{An ergodic theorem for permanents of oblong matrices}
\date{November, 2014 (first version); October, 2016 (final version).}

\subjclass[2010]{15A15; 26E60, 37A30, 60B20, 60F15}

\begin{thanks}
{J.B., G.I.\ and M.P.\ were partially supported by the Center of Dynamical Systems and Related Fields c\'odigo ACT1103 and FONDECYT projects 1140202, 1110040, and 1140988, respectively.}
% J.B. was partially supported by  Proyecto Fondecyt 1140202.
% G.I. was partially  supported by   Proyecto Fondecyt 1110040. M.P. was partially supported by Proyecto Fondecyt 1140988}
\end{thanks}

\author[J.~Bochi]{Jairo Bochi}
\author[G.~Iommi]{Godofredo Iommi}
\author[M.~Ponce]{Mario Ponce}

\begin{abstract}
We form a sequence of oblong matrices by evaluating an integrable vector-valued function along the orbit of an ergodic dynamical system. We obtain an almost sure asymptotic result for the permanents of those matrices. We also give an application to symmetric means.
 \end{abstract}

\maketitle
%%%%%%%%%%%%%%%%%%%%%%%%%%%%%%%%%%%%%%%

\fbox{
	\begin{minipage}{0.9\textwidth}
		\textbf{Disclaimer:} After the completion of this paper, we were informed by the referee that our Theorem~\ref{t.main} is a particular case of \cite[Theorem~U]{Aetc}, which was subsequently extended in the paper \cite{DG}.
	\end{minipage}
}

\bigskip

\section{An ergodic theorem for permanents}
If $A=(a_{i,j})$ is a $m \times n$ real matrix with $m \le n$, then its \emph{permanent} is 
defined by
$$
\per A \coloneqq  \sum_{\tau} a_{1, \tau(1)} \cdots a_{m,\tau(m)} \, ,
$$
where the sum is taken over all one-to-one functions $\tau \colon \{1, \dots, m\} \to  \{1, \dots, n\}$.
The number of such functions is the ``falling power''
$$
n^{\downarrow m} \coloneqq n (n-1) \cdots (n-m+1) \, .
$$
%where we use the notation from \cite{GKP}. 
We are interested in the average value $(\per A)/n^{\downarrow m}$ of the products $a_{1, \tau(1)} \cdots a_{m,\tau(m)}$. Our main result is:

\begin{theorem}\label{t.main}
Let $(\Omega,\P)$ be a probability space, and let $T \colon \Omega \to \Omega$ be an ergodic measure-preserving transformation.
Let $f_1$, \dots, $f_m \in L^1(\P)$.
For each $n \ge m$ and $\omega \in \Omega$, consider the matrix
\begin{equation}\label{e.oblong}
A(n,\omega) \coloneqq 
\begin{pmatrix}
f_1(\omega) &f_1 (T\omega) &\cdots\cdots &f_1(T^{n-1} \omega) \\
\vdots  &   &                 &\vdots  \\
f_m(\omega) &f_m (T\omega) &\cdots\cdots &f_m(T^{n-1} \omega)
\end{pmatrix}
\end{equation}
Then for $\P$-almost every $\omega$ we have
\begin{equation}\label{e.our_formula}
\lim_{n \to \infty}
\frac{1}{n^{\downarrow m}} \per A(n,\omega)
=
\prod_{i=1}^m \int f_i \, \mathrm{d}\P \, .
\end{equation}
\end{theorem}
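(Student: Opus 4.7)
The plan has two stages. Stage~1 handles the case of \emph{bounded} $f_i$ directly by an inclusion--exclusion; Stage~2 bootstraps to $L^1$ via truncation, exploiting the multilinearity of the permanent in its rows.

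\emph{Stage~1 (bounded case).} Writing out the definition,
$$
\per A(n,\omega) = \sum_{\substack{j_1,\ldots,j_m \in \{0,\ldots,n-1\}\\ \text{distinct}}} f_1(T^{j_1}\omega)\cdots f_m(T^{j_m}\omega).
$$
The analogous sum over \emph{all} tuples $(j_1,\ldots,j_m) \in \{0,\ldots,n-1\}^m$ factors as $\prod_{i=1}^m \sum_{j=0}^{n-1} f_i(T^j\omega)$, and dividing by $n^m$ converges almost surely to $\prod_i \int f_i\,d\P$ by applying Birkhoff's theorem to each $f_i$ individually. The discrepancy between the restricted and unrestricted sums is controlled by the number of tuples with at least one repeated coordinate, which is $n^m - n^{\downarrow m} = O(n^{m-1})$; if the $f_i$ are uniformly bounded, this discrepancy is $O(n^{m-1})$, hence $o(n^{\downarrow m})$, and the conclusion follows.

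\emph{Stage~2 (general case).} For $N>0$, split $f_i = f_i^N + g_i^N$ with $f_i^N \coloneqq f_i \cdot \mathbf{1}_{|f_i|\le N}$ bounded and $g_i^N \in L^1$. Since the permanent is linear in each row, expanding gives
$$
\per A(n,\omega) = \sum_{S \subseteq \{1,\ldots,m\}} \per A^{S,N}(n,\omega),
$$
where row $i$ of $A^{S,N}$ is the orbit of $f_i^N$ if $i\in S$ and the orbit of $g_i^N$ otherwise. The term $S=\{1,\ldots,m\}$ involves only bounded functions, so Stage~1 gives almost sure convergence to $\prod_i \int f_i^N\,d\P$, which by dominated convergence tends to $\prod_i \int f_i\,d\P$ as $N\to\infty$. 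For every other $S$, the elementary bound $|\per B| \le \prod_i \sum_j |B_{i,j}|$ (drop injectivity of $\tau$ and take absolute values) combined with Birkhoff applied to each $|f_i^N|$ and $|g_i^N|$ yields, almost surely,
$$
\limsup_{n\to\infty} \frac{|\per A^{S,N}(n,\omega)|}{n^{\downarrow m}} \le \prod_{i \in S} \int |f_i|\,d\P \cdot \prod_{i \notin S} \int |g_i^N|\,d\P,
$$
and the right-hand side tends to $0$ as $N\to\infty$ because at least one factor $\int |g_i^N|\,d\P$ appears. Intersecting the countably many full-measure sets indexed by $N\in\N$ and by the $2^m$ choices of $S$, then letting $N\to\infty$, gives the claim.

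\emph{Main obstacle.} The genuine subtlety is that products $f_i f_j$ need not be integrable, so one cannot directly apply any ergodic theorem to an expression of the form $\prod_i f_i(T^{j_i}\omega)$. The multilinearity of the permanent together with truncation is precisely what permits invoking Birkhoff one function at a time, sidestepping this integrability obstruction.
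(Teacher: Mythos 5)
Your proof is correct, but it takes a genuinely different route from the paper's. The paper expands $\per A(n,\omega)$ via the Binet--Minc formula into a sum over partitions $P$ of $\{1,\dots,m\}$ of products of power sums $s_I=\sum_{j}\prod_{i\in I}f_i(T^j\omega)$; the singleton partition supplies the main term via Birkhoff, while for $|I|\ge 2$ the product $\prod_{i\in I}f_i$ lies in $L^{1/|I|}(\P)$ by H\"older, and Aaronson's ergodic theorem for $L^p$ with $p<1$ (normalization $n^{1/p}$) shows those terms are $o(n^{|I|})$, hence negligible. You instead compare the permanent to the unrestricted product of Birkhoff sums and kill the diagonal discrepancy by a counting bound ($n^m-n^{\downarrow m}=O(n^{m-1})$) in the bounded case, then pass to $L^1$ by truncation and row-multilinearity, using the crude bound $|\per B|\le\prod_i\sum_j|B_{i,j}|$ for the cross terms. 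Your route is more elementary and self-contained: it needs nothing beyond Birkhoff's theorem and dominated convergence, and it confronts the integrability obstruction (products $f_if_j$ need not be in $L^1$) by truncation rather than by an $L^p$, $p<1$, ergodic theorem; it is essentially the classical truncation argument for strong laws of U-statistics. What the paper's route buys is brevity and a sharper structural statement --- each partition term has an exact asymptotic rate --- at the cost of importing Aaronson's lemma. The one point in your argument that genuinely needed care is the $N$-dependence of the exceptional null sets, and you handled it correctly by intersecting over $N\in\N$ and the $2^m$ subsets $S$ before letting $N\to\infty$.
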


In other words, consider a random infinite oblong matrix
whose columns are given by an ergodic stationary process $X_1$, $X_2$, \dots
taking values in $\R^m$. Then the permanent of the truncated $m \times n$ matrix
is asymptotically equal to ${n^{\downarrow m}}\lambda $, where $\lambda$ is the product of the expectations of the entries of $X_1$. 

\smallskip

Asymptotic results for the permanents of similar sequences of matrices have been obtained under much stronger assumptions on the distribution of the entries. See for example the work of  Rempa{\l}a and Weso{\l}owski \cite{RW} where several results, like the Central Limit Theorem, are obtained under strong distribution assumptions. Along the same lines we can mention the work of  Borovskikh and  Korolyuk  \cite{BK} and that of  Kaneva and Korolyuk~\cite{KK}.

\smallskip

Technically, one of the main novelties  of our approach 
is that we derive asymptotic information about permanents
from the following Binet--Minc formula \cite[Theorem~1.2, p.~120]{Minc}:

\begin{proposition}\label{p.BM}
Let $A = (a_{i,j})$ be a $m \times n$ matrix with $m \le n$.
Let $[m] \coloneqq \{1, 2, \dots, m\}$. 
For each nonempty subset $I \subset [m]$, write
\begin{equation}\label{e.sum}
s_I \coloneqq \sum_{j = 1}^n \prod_{i\in I} a_{i,j} \, .
\end{equation}
Let $\cP_m$ denote the set of all partitions of $[m]$ into nonempty subsets.
Then
\begin{equation}\label{e.binet-minc}
\per A = 
\sum_{P \in \cP_m} (-1)^{m - |P|} \prod_{I \in P} \big(|I| - 1\big)! \, s_I \, .
\end{equation}
\end{proposition}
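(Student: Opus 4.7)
My plan is to derive \eqref{e.binet-minc} from Möbius inversion on the lattice $\cP_m$ of set partitions of $[m]$, ordered so that $P \le Q$ iff $P$ is a refinement of $Q$ (so that the minimum element $\hat{0}$ is the partition into singletons). First I would associate to each $Q \in \cP_m$ the quantity
\[
f(Q) \coloneqq \sum_{\substack{\tau \colon [m] \to [n] \\ \ker(\tau) = Q}} \prod_{i=1}^m a_{i,\tau(i)},
\]
where $\ker(\tau)$ denotes the partition of $[m]$ into the level sets of $\tau$. The two observations that make the method work are: (i) $f(\hat{0}) = \per A$, because $\ker(\tau) = \hat{0}$ is precisely the injectivity condition on $\tau$; and (ii) the coarsened sum
\[
g(Q) \coloneqq \sum_{Q' \ge Q} f(Q') = \sum_{\ker(\tau) \ge Q} \prod_{i=1}^m a_{i,\tau(i)}
\]
factors cleanly over the blocks of $Q$. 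Indeed, $\ker(\tau) \ge Q$ simply says that $\tau$ is constant on every block of $Q$, so choosing the common value on each block independently yields $g(Q) = \prod_{I \in Q} s_I$.

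The second step is to apply Möbius inversion on $\cP_m$ to invert $g = \sum_{Q' \ge \cdot} f(Q')$, producing
\[
\per A \;=\; f(\hat{0}) \;=\; \sum_{Q \in \cP_m} \mu(\hat{0}, Q) \prod_{I \in Q} s_I.
\]
To close the argument, I would invoke the classical evaluation of the Möbius function on the partition lattice,
\[
\mu(\hat{0}, Q) = \prod_{I \in Q} (-1)^{|I|-1}(|I|-1)!,
\]
which collapses to $(-1)^{m-|Q|} \prod_{I \in Q}(|I|-1)!$ via the identity $\sum_{I \in Q}(|I|-1) = m - |Q|$.

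The one nontrivial ingredient is this formula for $\mu(\hat{0}, Q)$; it is classical (see e.g.\ Rota's \emph{On the foundations of combinatorial theory I}, or Stanley's \emph{Enumerative Combinatorics}, Vol.~I), so the main work is really a bibliographic pointer rather than a computation. For a self-contained presentation one could instead perform an inclusion–exclusion directly over the ``collision'' relations $\tau(i) = \tau(i')$ on pairs $i \ne i'$, and then group the resulting terms by the connected components of the chosen relation; this reproduces the partition-lattice argument but avoids naming the lattice explicitly.
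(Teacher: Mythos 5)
Your argument is correct: the kernel-partition setup, the factorization $g(Q)=\prod_{I\in Q}s_I$ over blocks, the dual M\"obius inversion on the partition lattice, and the classical evaluation $\mu(\hat 0,Q)=\prod_{I\in Q}(-1)^{|I|-1}(|I|-1)!$ all fit together exactly as you describe to give \eqref{e.binet-minc}. The paper itself offers no proof of Proposition~\ref{p.BM} --- it only cites Minc --- but its Remark explicitly points to Crapo's M\"obius-inversion proof, which is precisely the argument you have reconstructed.
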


As an example, in the case $m=3$, formula \eqref{e.binet-minc} becomes
$$
\per A = 
s_{\{1\}} s_{\{2\}} s_{\{3\}} - s_{\{1\}} s_{\{2,3\}} - s_{\{2\}} s_{\{1,3\}} -  s_{\{3\}} s_{\{1,2\}} + 2 s_{\{1,2,3\}} \, ,
$$
a relation obtained by Binet in 1812.

The Binet--Minc formula is relatively unpopular because there is a 
another formula, due to Ryser,
which is more efficient for computational purposes: see \cite[\S~7.2--7.3]{Minc}.

\begin{remark}
We found out that in 1968, thus 10 years before Minc, 
Crapo \cite{Crapo} used the (then recent) powerful combinatorial theory of M\"obius inversion
to give extremely short proofs of both Ryser's formula and Proposition~\ref{p.BM}.
Crapo attributes Proposition~\ref{p.BM} to J.E.~Graver and W.~Gustin (unpublished).
In spite of all this, we will keep calling it Binet--Minc formula.
\end{remark}

%%%%%%%%%%%%%%%%%%%%%%%%%%%%%%%%%%%%%%%

\section{Proof of the theorem}

The idea of the proof of Theorem~\ref{t.main} is 
to show that when $A=A(n,\omega)$ all terms in the sum in \eqref{e.binet-minc}
are negligible compared to $n^{\downarrow m}$, except for the term
that comes from the partition $P = \{\{1\},\{2\},\dots,\{m\}\}$, 
whose behavior is described by Birkhoff's ergodic theorem.
In order to prove that the other terms are indeed negligible, we will use the following result:

\begin{lemma}[Aaronson]\label{l.Aaronson}
If  $0 < p < 1$ and $f \in L^p(\P)$ then for $\P$-a.e.\ $\omega$,
$$
\lim_{n \to \infty} \frac{1}{n^{1/p}} \sum_{j=0}^{n-1} f(T^j \omega) = 0 \, .
$$
\end{lemma}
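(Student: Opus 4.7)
The plan is to derive the lemma from Birkhoff's ergodic theorem applied to the $L^1(\P)$-function $|f|^p$, combined with the elementary subadditivity inequality $(a+b)^p \leq a^p + b^p$ valid for $a, b \geq 0$ and $0 < p < 1$. Writing $S_n g \coloneqq \sum_{j=0}^{n-1} g(T^j\omega)$, the bound $|S_n f| \leq S_n |f|$ lets me assume throughout that $f \geq 0$.

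Iterating the subadditivity inequality gives $(S_n f)^p \leq S_n(f^p)$. Since $f^p \in L^1(\P)$, Birkhoff's theorem yields $S_n(f^p)/n \to \int f^p \, \mathrm{d}\P$ for $\P$-a.e.\ $\omega$, and taking $p$-th roots produces the bound $S_n f = O(n^{1/p})$ a.s. This is the correct order of magnitude, but with a constant, not yet the desired $o(n^{1/p})$.

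To sharpen the estimate, I would combine this bound with a truncation argument. For each integer $M \geq 1$, split $f = f \mathbf{1}_{\{f \leq M\}} + f \mathbf{1}_{\{f > M\}}$. The bounded piece lies in $L^1$, so the ordinary Birkhoff theorem gives $S_n(f \mathbf{1}_{\{f \leq M\}}) = O(n) = o(n^{1/p})$ a.s. Applying the previous step to the tail yields
\begin{equation*}
\limsup_{n \to \infty} \frac{1}{n^{1/p}} S_n\big(f \mathbf{1}_{\{f > M\}}\big)(\omega)
\leq \left( \int_{\{f > M\}} f^p \, \mathrm{d}\P \right)^{1/p}
\end{equation*}
on a full-measure set. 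Intersecting these sets over $M \in \N$, the bound holds for every $M$ simultaneously. Letting $M \to \infty$ and using dominated convergence against the $L^1$-function $f^p$ sends the right-hand side to $0$, so $\limsup S_n f / n^{1/p} \leq 0$ a.s.; combined with $f \geq 0$, this gives the conclusion.

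The argument is essentially elementary, and the only point requiring care is this quantifier interchange—the constants produced by Birkhoff depend on $\omega$, so one needs a common full-measure set before letting $M \to \infty$. Since only countably many values of $M$ appear, this causes no difficulty, and I do not expect any serious obstacle beyond assembling the pieces.
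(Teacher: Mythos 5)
Your argument is correct, but it is genuinely different from what the paper does: the paper gives no proof at all, simply citing Aaronson's general Proposition~2.3.1 (on ergodic sums normalized by an arbitrary function $a(x)$ with suitable growth, specialized here to $a(x)=x^{p}$). You instead give a short, self-contained proof from first principles: reduce to $f\ge 0$, use the subadditivity $(a+b)^{p}\le a^{p}+b^{p}$ to get $\bigl(S_n f\bigr)^{p}\le S_n(f^{p})$ with $f^{p}\in L^1$, and then run a truncation at level $M$ so that the bounded part is $O(n)=o(n^{1/p})$ while the tail contributes at most $\bigl(\int_{\{f>M\}}f^{p}\,\mathrm{d}\P\bigr)^{1/p}$ to the normalized $\limsup$; intersecting the full-measure sets over $M\in\N$ and letting $M\to\infty$ finishes the job. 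All the steps check out, including the quantifier interchange you flag, and your preliminary $O(n^{1/p})$ bound is actually redundant once the truncation is in place. What the citation buys the paper is generality (Aaronson's result covers a whole scale of normalizing sequences); what your argument buys is a two-paragraph elementary proof using nothing beyond Birkhoff's theorem, which would make the paper more self-contained.
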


The lemma is a particular case of \cite[Prop.~2.3.1, p.~65]{Aa_book}
corresponding to the function $a(x) \coloneqq x^p$.
(See also \cite{Aa_paper}.)

\begin{proof}[Proof of the  Theorem~\ref{t.main}]
Let $(\Omega,\P)$, $T$, and $f_1$, \dots, $f_m$ be as in the statement of the theorem.
For each integer $n \ge m$, each $\omega \in \Omega$,
and each nonempty subset $I$ of $[m] \coloneqq \{1,\dots,m\}$, 
let $s_I(n,\omega)$ be the sum \eqref{e.sum} 
corresponding to the matrix $A = A(n,\omega)$ defined by \eqref{e.oblong}.
We can write this expression as a Birkhoff sum
$$
s_I(n,\omega) = \sum_{j=0}^{n-1} f_I(T^j \omega)
$$
of the function $f_I \coloneqq \prod_{i \in I} f_i$.
By the H\"older inequality,
$$
\left(\int |f_I|^{1/|I|} \, \mathrm{d}\P \right)^{|I|} \le \prod_{i \in I}  \int |f_i| \, \mathrm{d}\P \, ,
$$
and in particular $f_I$ belongs to the space $L^{1/|I|}(\P)$.
So for a.e.~$\omega$,
it follows from  Birkhoff's ergodic theorem and Lemma~\ref{l.Aaronson} that
$$
\lim_{n \to \infty} \frac{s_I(n,\omega)}{n^{|I|}}  = 
\begin{cases}
	\int f_I \, \mathrm{d}\P &\quad \text{if } |I| = 1 \, , \\ 
	0 &\quad \text{if } |I| > 1 \, .
\end{cases}
$$
Therefore, by Binet--Minc formula \eqref{e.binet-minc},
$$
\frac{\per A(n,\omega)}{n^m} = 
\sum_{P \in \cP_m} (-1)^{m - |P|} \prod_{I \in P} \big(|I| - 1\big)! \, \frac{s_I(n,\omega)}{n^{|I|}} 
\to \prod_{i=1}^m 	\int f_I \, \mathrm{d}\P 
$$
as $n \to \infty$,
because the only $P$ that contributes to the limit is the partition into $m$ singletons.
Since $n^{\downarrow m} / n^m \to 1$ as $n \to \infty$,
relation \eqref{e.our_formula} follows.
\end{proof}

\begin{remark} 
The aforementioned works \cite{BK,KK,RW} consider sequences of matrices
where the number of rows $m$ is allowed to depend on the number of columns $n$.
This more general situation cannot be handled with our technique, at least without 
further assumptions.
\end{remark}

%%%%%%%%%%%%%%%%%%%%%%%%%%%%%%%%%%%%%%%%%%%%%%
\section{An Ergodic Theorem for Symmetric Means}

Let $1 \le m \le n$ be integers.
Recall that the \emph{elementary symmetric polynomial of degree $m$ in $n$ variables} is defined as 
\[
E_m(x_1, x_2, \dots , x_n) \coloneqq\sum_{1 \leq i_1 < i_2 < \dots <i_m \leq n} x_{i_{1}}x_{i_{2}} \cdots x_{i_{m}} \, .
\]
In the case that $x_1$, \dots, $x_n$ are nonnegative real numbers,
we define their
\emph{$m$-th symmetric mean} by
\begin{equation*}
\cM_m(x_1, x_2, \dots , x_n) \coloneqq \left( \frac{E_m(x_1,\dots,x_n)}{\left( {n \atop m } \right)}   \right)^{1/m}.
\end{equation*}
Symmetric means generalize both the arithmetic mean, given by $\cM_1(x_1,\dots,x_n)$, and the geometric mean, given by $\cM_n(x_1,\dots,x_n)$.  
A classical result of Maclaurin says that $\cM_m(x_1,\dots,x_n)$ is nonincreasing with respect to $m$: see \cite[\S~2.22]{HLP}.

\smallskip

It turns out that Theorem \ref{t.main} can be used to describe the asymptotic behavior of symmetric means. The result is as follows:

\begin{proposition}\label{p.coro}
Let $(\Omega,\P)$ be a probability space, and let $T \colon \Omega \to \Omega$ be an ergodic measure-preserving transformation. Fix a measurable function $f \ge 0$
and an integer $m \ge 0$.
\begin{enumerate}
\item\label{i.a} 
If $f \in L^1(\P)$ and $m > 0$ then for $\P$-almost every $\omega \in \Omega$ we have
\begin{equation*}
\lim_{n \to \infty} 
\cM_m \big( f(\omega), f(T\omega), \dots, f(T^{n-1}\omega)\big)
=  \int  f\,\mathrm{d}\P.
\end{equation*}
\item\label{i.b}
If  $ \log f \in L^1(\P)$ then for $\P$-almost every $\omega \in \Omega$ we have
\begin{equation*}
\lim_{n \to \infty} 
\cM_{n-m} \big( f(\omega), f(T\omega), \dots, f(T^{n-1}\omega)\big)
= \exp \left( \int \log f \,  \mathrm{d}\P \right).
\end{equation*}
\end{enumerate}
\end{proposition}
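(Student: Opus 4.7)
For part (\ref{i.a}) I expect a short direct reduction to Theorem~\ref{t.main}. The plan is to apply the theorem with $f_1 = \cdots = f_m = f$: every row of $A(n,\omega)$ is then the orbit segment $\big(f(\omega), f(T\omega), \ldots, f(T^{n-1}\omega)\big)$. Because the permanent of a matrix with identical rows counts each unordered $m$-subset of columns exactly $m!$ times, one has $\per A(n,\omega) = m!\, E_m\big(f(\omega), \ldots, f(T^{n-1}\omega)\big)$, and since $n^{\downarrow m} = m!\binom{n}{m}$, the ratio on the left of \eqref{e.our_formula} equals $\cM_m^m$. Theorem~\ref{t.main} then gives $\cM_m^m \to \big(\int f\,\mathrm{d}\P\big)^m$, and extracting an $m$-th root closes part (\ref{i.a}).

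Part (\ref{i.b}) cannot be attacked the same way, since the number of rows would be $n-m$, violating the fixed-row-count hypothesis that the Remark after Theorem~\ref{t.main} warns about. Instead I would exploit the classical duality
$$
E_{n-m}(x_1, \ldots, x_n) \;=\; (x_1 \cdots x_n)\, E_m(1/x_1, \ldots, 1/x_n),
$$
valid when all $x_i > 0$; this applies because $\log f \in L^1(\P)$ forces $f > 0$ almost surely. Writing $x_j = f(T^{j-1}\omega)$, the identity $\cM_{n-m}^{n-m} = \binom{n}{m}^{-1}(x_1\cdots x_n)\, E_m(1/x_1, \ldots, 1/x_n)$ yields, after taking logarithms and dividing by $n$,
$$
\frac{n-m}{n} \log \cM_{n-m}
\;=\; \frac{1}{n}\sum_{j=0}^{n-1}\log f(T^j\omega) \;+\; \frac{1}{n}\log E_m(1/x_1, \ldots, 1/x_n) \;-\; \frac{1}{n}\log\binom{n}{m}.
$$
The first term tends to $\int \log f \,\mathrm{d}\P$ by Birkhoff, the third is $O((\log n)/n) \to 0$, and the problem reduces to showing the middle term is $o(1)$ almost surely.

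The main obstacle is therefore this middle term. The plan is to sandwich $E_m(y_1,\ldots,y_n)$, with $y_j = 1/x_j$, between two elementary bounds. From below, Maclaurin's inequality $\cM_m \ge \cM_n$ applied to the $y_j$ gives $E_m(y) \ge \binom{n}{m}(y_1\cdots y_n)^{m/n}$; dividing its logarithm by $n$ yields a contribution $(m/n)\cdot\tfrac{1}{n}\sum_j\log y_j$, which vanishes since $m/n\to 0$ while the Birkhoff average of $-\log f$ converges. From above, the trivial bound $E_m(y) \le \binom{n}{m}\big(\max_j y_j\big)^m$ contributes a term $(m/n)\max_{0 \le j < n}(-\log f)(T^j\omega)$, and the delicate claim is that this maximum is $o(n)$ a.s. That will follow by applying to $h \coloneqq (\log f)^- \in L^1(\P)$ the standard single-term decay: the identity $h(T^{n-1}\omega)/n = S_n(\omega)/n - \tfrac{n-1}{n}\cdot S_{n-1}(\omega)/(n-1)$ together with Birkhoff yields $h(T^{n-1}\omega)/n \to 0$ a.s., from which a routine $\epsilon$-argument gives $\max_{0 \le j < n} h(T^j\omega) = o(n)$. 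Once the middle term is dispatched, the display collapses to $\log \cM_{n-m} \to \int \log f\,\mathrm{d}\P$, and exponentiating yields part (\ref{i.b}).
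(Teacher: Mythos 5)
Your proof is correct and follows essentially the same route as the paper: part (\ref{i.a}) is the identical reduction to Theorem~\ref{t.main} via $\per A(n,\omega)=m!\,E_m$, and part (\ref{i.b}) rests on the same complementation identity $E_{n-m}(x_1,\dots,x_n)=(x_1\cdots x_n)\,E_m(1/x_1,\dots,1/x_n)$, Birkhoff's theorem for the full product, and the fact that $\max_{0\le j<n}|\log f(T^j\omega)|=o(n)$ (the paper's Lemma~\ref{l.max}), which you reprove by the same consecutive-Birkhoff-sums argument. The only cosmetic differences are that you work additively with logarithms rather than with the ratio $\cM_{n-m}/\cM_n$, and that you invoke Maclaurin's inequality for the lower bound on $E_m$ where the paper simply bounds the symmetric mean between the minimum and maximum of its arguments.
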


\begin{remark}
A much more general result 
due to Hal\'{a}sz and Sz\'{e}kely \cite{HS}
describes the what happens when $m = m(n)$ is such that $m(n)/n$ converges to some $c \in [0,1]$.
(Notice that the independence assumption in that paper is not actually used and ergodicity suffices.)
\end{remark}

The first part of the proposition is just a corollary of Theorem~\ref{t.main}:

\begin{proof}[Proof of part~(\ref{i.a})]
Consider functions $f_1$, \dots, $f_m$ all equal to $f$;
then the the matrix defined by \eqref{e.oblong} has permanent
$$
\per A(n,\omega) = 
m! \, E_m \big( f(\omega), f(T\omega), \dots, f(T^{n-1}\omega)\big) \, .
$$
Therefore Theorem~\ref{t.main} allows us to conclude.
\end{proof}

\begin{remark}
Binet--Minc formula \eqref{e.binet-minc} applied to
matrices with equal rows yields a way of 
expressing elementary symmetric polynomials in terms of power sums.
Such expressions are equivalent to Newton's identities (see \cite[p.~95--96, 251]{Merris}).
So it should be possible to deduce part~(\ref{i.a}) of Proposition~\ref{p.coro}
directly from Newton's identities and Aaronson's Lemma~\ref{l.Aaronson}.
\end{remark}

To prove the second part of Proposition~\ref{p.coro}, we need the following fact:
\begin{lemma}\label{l.max}
If $f \in L^1(\P)$ then for $\P$-a.e.\ $\omega$,
$$
\lim_{n \to \infty} \frac{1}{n} \ \max_{j = 0,\dots,n-1}  \big| f(T^j \omega) \big|
= 0\, .
$$
\end{lemma}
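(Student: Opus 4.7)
The plan is to combine a Borel--Cantelli argument with the standard tail estimate for an $L^1$ random variable. The key observation is that although $\frac{1}{n}\max_{j<n}|f(T^j\omega)|$ is a maximum rather than an average, it suffices to control the pointwise size of $|f(T^j\omega)|$ along the orbit up to $o(j)$, which is exactly what an $L^1$ bound together with the invariance of $\P$ gives.

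First I would fix $\varepsilon > 0$ and use the layer-cake formula
$\int |f|\,\mathrm{d}\P = \int_0^\infty \P(|f| > t)\,\mathrm{d}t$
to conclude that $\sum_{n \ge 1} \P(|f| > \varepsilon n) < \infty$. Since $T$ preserves $\P$, the events
$A_n \coloneqq \{\omega : |f(T^n\omega)| > \varepsilon n\}$
satisfy $\P(A_n) = \P(|f| > \varepsilon n)$, so $\sum_n \P(A_n) < \infty$. The Borel--Cantelli lemma then yields a set $\Omega_\varepsilon$ of full measure on which $\omega \in A_n$ for only finitely many $n$; i.e.\ there is an index $N(\omega)$ (possibly depending on $\varepsilon$) such that $|f(T^j\omega)| \le \varepsilon j$ for all $j \ge N(\omega)$.

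Next I would split the maximum into the initial block $j < N(\omega)$, which contributes a bounded constant $C(\omega)$ and hence is negligible after dividing by $n$, and the tail block $N(\omega) \le j < n$, where the pointwise bound gives $|f(T^j\omega)| \le \varepsilon j \le \varepsilon n$. Therefore
\[
\limsup_{n\to\infty} \frac{1}{n} \max_{0 \le j < n} |f(T^j\omega)| \le \varepsilon
\qquad \text{for every } \omega \in \Omega_\varepsilon.
\]
Finally, intersecting the full-measure sets $\Omega_{1/k}$ over $k \in \N$ (a countable intersection, hence still of full measure) yields a single set on which the limsup is $\le 1/k$ for every $k$, i.e.\ the limit equals $0$.

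I don't foresee a substantive obstacle here: the argument is essentially a standard Borel--Cantelli maximal bound, with the measure-preserving property of $T$ doing the work of reducing everything to a single tail of the distribution of $|f|$. The only mild care-point is making sure the threshold $N(\omega)$, which \emph{a priori} depends on $\varepsilon$, does not interfere with taking $\varepsilon \to 0$; this is handled cleanly by choosing $\varepsilon$ along the countable sequence $1/k$ rather than trying to take a continuous limit.
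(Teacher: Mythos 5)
Your proof is correct, but it takes a genuinely different route from the paper's. The paper disposes of the lemma in one line: by Birkhoff's ergodic theorem the averages $\frac{1}{n}\sum_{j=0}^{n-1} f(T^j\omega)$ converge a.e., so the difference of consecutive Birkhoff sums gives $f(T^n\omega)/n \to 0$ a.e., and the maximal statement follows from the elementary fact that $a_j/j \to 0$ implies $\frac{1}{n}\max_{j<n}|a_j| \to 0$. You instead avoid the ergodic theorem altogether: the tail-sum characterization of integrability gives $\sum_n \P(|f|>\varepsilon n)<\infty$, invariance of $\P$ transports this to the events $\{|f(T^n\omega)|>\varepsilon n\}$, and Borel--Cantelli plus the split into an initial block and a tail block yields $\limsup_n \frac{1}{n}\max_{j<n}|f(T^j\omega)| \le \varepsilon$ a.e.; taking $\varepsilon = 1/k$ over a countable family finishes the job. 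Your argument is the standard ``i.i.d.-style'' proof and is more elementary and self-contained (it uses only measure preservation and first principles of integration), while the paper's is shorter given that Birkhoff's theorem is already the central tool of the article; both versions, it is worth noting, require only that $T$ preserve $\P$ and make no use of ergodicity.
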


\begin{proof}
By Birkhoff's ergodic theorem, $f(T^n\omega)/n \to 0$,
and the lemma is a straightforward consequence.
\end{proof}

\begin{proof}[Proof of part~(\ref{i.b})]
The case $m=0$ is rather simple; indeed,
$$
\log \cM_{n}\big( f(\omega), \dots, f(T^{n-1}\omega)\big) =  \cM_1\big( \log f(\omega), \dots, \log f(T^{n-1}\omega)\big),
$$
so the result follows from Birkhoff's ergodic theorem.
We reduce the case $m>0$ to the above by using the relation
$$
\frac{\cM_{n-m}\big( f(\omega), \dots, f(T^{n-1}\omega)\big)}{\cM_{n}\big( f(\omega), \dots, f(T^{n-1}\omega)\big)} = 
\left[ \cM_{m}\big( 1/f(\omega), \dots, 1/f(T^{n-1}\omega)\big) \right]^{\frac{m}{n-m}}
\eqqcolon (\star)
\, .
$$
Since the $m$-th symmetric mean of a list of nonnegative numbers is between their minimum and their maximum, we have
$$
\left| \log (\star)  \right| \le 
\frac{m}{n-m} \ \max_{j = 0,\dots,n-1}  \big| \log f(T^j \omega)\big|  \, ,
$$
which by Lemma~\ref{l.max} converges almost everywhere to $0$ as $n \to \infty$.
So the result follows.
\end{proof}

%%%%%%%%%%%%%%%%%%%%%%%%%%%%%%%

\bigskip

\bigskip

\begin{small}
	\noindent
	\begin{tabular}{lll}
		\textsc{Jairo Bochi} & 
		\textsc{Godofredo Iommi} & 
		\textsc{Mario Ponce} \\ 
		\email{\href{mailto:jairo.bochi@mat.puc.cl}{jairo.bochi@mat.puc.cl}} &
		\email{\href{mailto:giommi@mat.puc.cl}{giommi@mat.puc.cl}} &		
		\email{\href{mailto:mponcea@mat.puc.cl}{mponcea@mat.puc.cl}} \\
		\href{http://www.mat.uc.cl/~jairo.bochi}{www.mat.uc.cl/$\sim$jairo.bochi} & 
		\href{http://www.mat.uc.cl/~giommi}{www.mat.uc.cl/$\sim$giommi} & 		
		\href{http://www.mat.uc.cl/~mponcea}{www.mat.uc.cl/$\sim$mponcea} 	
	\end{tabular}
	
	\bigskip
	
	\noindent
	\textsc{Facultad de Matem\'aticas, Pontificia Universidad Cat\'olica de Chile}
	
	\noindent
	\textsc{Avenida Vicu\~na Mackenna 4860, Santiago, Chile}
\end{small}

\end{document}